\newtheorem{thm}{Theorem}
\newtheorem{lem}[thm]{Lemma}
\newtheorem{prop}[thm]{Proposition}
\newtheorem{remark}{Remark}
\theoremstyle{remark}
\theoremstyle{definition}
\newcommand{\abs}[1]{\left\vert#1\right\vert}
\title{Focus singularities}
\begin{document}
\begin{center}

\Large{{\bf Focus-focus singularities in classical mechanics}}

\vspace{5mm}

\normalsize

{\bf Gleb Smirnov}

\vspace{5mm}

{Lomonosov Moscow State University, Department of Differential Geometry and Its Application, Moscow}

\vspace{2mm}

{glebevgen@yandex.ru}
\end{center}

\vspace{3mm}

\begin{abstract}
In this paper the local singularities of integrable Hamiltonian systems with two degrees of freedom are studied. The topological obstruction to the existence of a focus-focus singularity with the given complexity is found. It is shown that only simple focus-focus singularities can appear in a typical mechanical system. Model examples of mechanical systems with complicated focus-focus singularity are given.
\end{abstract}

\section*{Introduction}\label{sec1}

An integrable Hamiltonian system with two degrees of freedom is a symplectic 4-manifold $(M^4,\omega)$ (the phase space) with two functions $H,F:M^4 \rightarrow \mathbb{R}$ (the first integrals), which are linearly independent almost everywhere, and they are in involution $\left\{H,f\right\} = 0$ with respect to the corresponding Poisson bracket. The vector field $\text{sgrad}\,H$ defined by the following condition
$$
\text{d}\,H(\boldsymbol{\eta}) = \omega^2 (\boldsymbol{\eta}, \text{sgrad}\,H)\ \text{for arbitraty vector field $\boldsymbol{\eta}$} 
$$
is called \emph{Hamiltonian vector field}. 

The level $\left\{H = h\right\}$ of the function $H$ is called an \emph{energy level}. We will assume that all energy levels are compact. Mapping $\mathcal{F}: M^4 \rightarrow \mathbb{R}^2(h,f)$, $\mathcal{F}(x) = \left(H(x), F(x) \right)$ is called a \emph{moment map}. The moment map defines a foliation on $M^4$ called a \emph{Liouville foliation}, whose fibres are connected components of its preimages $\mathcal{F}^{-1}(h, f)$.

A point $x\in M^4$ is called a \emph{singular point of rank} $i$ ($i=0,1$) of Hamiltonian system if $\text{rank}\,\text{d}\,\mathcal{F}(x) = i$ and the fiber containing it is called a \emph{singular fiber}.

If the rank of the moment map is maximal at all points of the fiber, then this fibre is called \emph{regular} and according to Liouville theorem it is diffeomorphic to 2-dimensional torus $\mathbb{T}^2$ (if it is compact). Thus, the phase space of an integrable system is a fibered manifold, whose fibres are invariant submanifold. Almost all of them are tori.

If we want to investigate the qualitative behavior of an integrable system, we must study the topology of the corresponding Lioville foliation. Since all fibres in the neighbourhood of a regular fiber have got the same structure (trivial $\mathbb{T}^n$ - bundle), the topology is mainly determined by the singularities of our system.

The description of the basic concepts of Hamiltonian mechanics is given, for example, in \cite{arnold}.
The topological methods are described in details in \cite{smeil, kharlamov_book, igs1, cushman_book} (see also \cite{shema, lerman, zung3, bolsinov, borisov, oshemkov}).

The local classification of singular points is given by Eliasson theorem (see \cite{eliasson, bolsinov}). In this work we will study one type of singular points. A singular point $x$ of rank 0 is called a \emph{focus-focus singularity}, if the Hessians $\text{d}^2\!H(x), \text{d}^2\!F(x)$ are linearly independent, and the roots of the polynom
$$
P(\lambda) = \text{det}\!\left( a\,\text{d}^2\!H(x) + b\,\text{d}^2\!F(x) - \lambda \Omega(x) \right)
$$
are distinct and have a form $\pm x \pm i y$, where $x, y \neq 0$ for a certain linear combination $a\,\text{d}^2\!H(x) + b\,\text{d}^2\!F(x)$. Here $\Omega$ is a matrix of the symplectic form.

Recall the basic properties of focus-focus singularities, see details in
\cite{matveev, zung1, zung2, svn, igs1, bolsinov, izosimov}.
Assume that the point $x$ is a singular point of a focus-focus type. Then \cite{vey,ito}
there exist symplectic coordinates $p_1, q_1, p_2, q_2$, $x = (0,0,0,0)$ in a certain neighborhood of $x$ such that in these coordinates the first integrals $H, F$ become the functions of a pair $f_1, f_2$ of \emph{canonical integrals}, namely,
$$
\displaystyle{ H = H(f_1, f_2),\ F = F(f_1, f_2),}
$$
where
$
\displaystyle{ f_1 = p_1 q_1 + p_2 q_2,\ f_2 = p_1 q_2 - p_2 q_1,}
$
and
$\displaystyle{ \omega = \text{d}\,p_1 \wedge \text{d}\,q_1 + \text{d}\,p_2 \wedge \text{d}\,q_2}$.

Let $L$ be a singular fiber of Liouville foliation, which contains one or several points of a focus-focus type.
If a singular fiber contains no points of rank 1, and all its points of rank 0 are points of a focus-focus type, then we will call $L$ a \emph{focus fiber}. In this case we will say that the integrable system has a focus-focus singularity.

\begin{floatingfigure}[lrp]{5cm}
\centering
\includegraphics{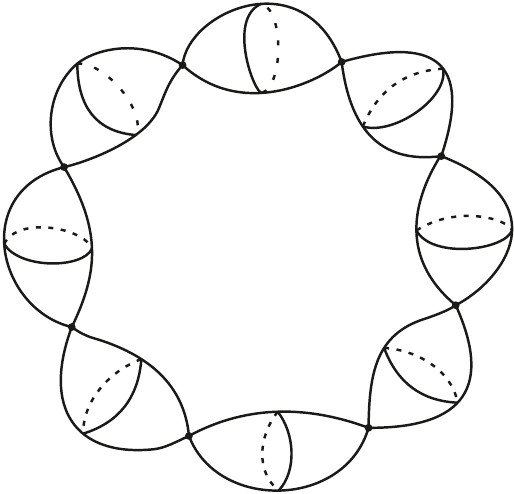}
\caption{Focus fiber}
\label{fig:figlabel}
\end{floatingfigure}

The number of singular points of the fiber will be called a \emph{complexity} of focus singularity. In case of complexity 1 we will say that the singularity is \emph{simple}. Denote these points as $x_1,\ldots,x_n$. The singular fiber is a sequence of embedded Lagrangian 2-spheres $L_i$,
each of them has a pair of selected points $x_i,x_{i+1}$. 
At every point $x_i$ two neighboring spheres $L_{i-1}$ и $L_i$ intersect transversally inside a 4-dimensional manifold. The singular fiber $L$ is a limit of a family of regular tori.
This limit is described as follows: we fix $n$ parallel non-trivial cycles on a regular torus, each of these cycles contracts to a point. As a result we obtain a pinched torus with $n$ constrictions (fig. 1).

The focus-focus singularities appear in many integrable mechanical systems. As an example, we will mention two systems of rigid body dynamics: Clebsch system \cite{pogosyan, morozov} and Lagrange system \cite{oshemkov}. Other examples are the spherical pendulum system \cite{cushman_book} and the problem of a motion of ellipsoid on a smooth surface \cite{ivo4kin, kazakov}.
Except for the latter system, all focus-focus singularities in these cases are simple. But at the same time we can construct a model example of a singularity of any complexity \cite{matveev} (see also \cite{igs1}). One can assume that there are some topological obstructions to the existence of a focus-focus singularity with large value of complexity on a given symplectic manifold. In the next section we will discuss the well-known question: what singularities can the given symplectic manifold contain? In such form this question was formulated in \cite{bolsinov}.

\section*{Topological obstruction to the existence of the focus-focus singularity of the given complexity}\label{sec2}

Recall the neccesary information from algebraic topology \cite{Bt, hatcher}. Let $S$ be a closed compact oriented 2-submanifold in an oriented 4-manifold $M$. Then this submanifold has a class of De Rham cohomology with compact support $\left[ \eta_S \right] \in H_{c}^{2}(M)$, which is defined uniquely by the condition
\begin{equation}\label{eq:form_2_1}
\int_S \omega = \int_M \omega \wedge \eta_S\quad \text{for any closed $2$-form $\omega$.}
\end{equation}
This class is called Poincare dual to a submanifold $S$.
Each cohomology class in $H^{2}_{c}(M)$ defines naturally a class in $H^{2}(M)$,
and broadly speaking, it can be trivial. But in this work these details are unimportant, and later we will consider class $\left[\eta_S\right]$ as a cohomology class in $H^{2}(M)$.
For any closed compact oriented 2-dimensional submanifold $X$, which intersects transversally with $S$, the value of the integral $\int_X \eta_S$ is denoted by by $X\cdot S$ and is called algebraic number of points of intersection of $X$ and $S$.

The algebraic number of points of intersection (\emph{index of intersection}) of two transversally intersected 
submanifolds $X$ and $S$ also can be defined in purely geometrical terms.
The intersection $S \cap X$ consists of a finite number of points $x_1,\ldots,x_n$. Denote as $\tau_{(i)}^{S}$ an orienting basis, which is tangent to $S$ at the point $x_{i}$, and as $\tau_{(i)}^{X}$ an orienting basis, tangent to $X$ at the point $x_{i}$; we add to the point $x_{i}$ the 
\enquote{$+$} sign, if the basis $\left( \tau_{(i)}^{X}, \tau_{(i)}^{S} \right)$ is an orienting basis for $M$ at the point $x_i$, and we add the \enquote{$-$} sign in the opposite case; denote this sign as $\text{sgn}\,x_{i}$. The index of intersection of submanifolds $X$ and $S$ is the integer number
$$
\displaystyle{ X\cdot S = \sum_{i} \text{sgn}\,x_{i} }.
$$

The index of intersection does not change if we replace the embeddings of $X$ and $S$ with one of the homotopic to them. Consider a small perturbation of the submanifold $S$ in its small neighborhood in the manifold $M$. As a result we obtain a submanifold $S'$, which intersects $S$ transversally. The algebraic number of points of intersection of the submanifolds $S$ and $S'$ is called an \emph{index of self-intersection} of the submanifold $S$. For the index of intersection we have a relation
\begin{equation}\label{eq:form_2_2}
\displaystyle{ \int_{S'} \eta_S = \int_S \eta_S = S\cdot S = \text{index of self-intersection of $S$.} }
\end{equation}

We need the following well-known proposition from symplectic topology (see \cite{makdaff}).
\begin{lem}\label{lem:index}
The index of self-intersection of a Lagrangian submanifold $X$ embedded to a symplectic 4-manifold $(M,\omega)$, where $X$ is oriented by the form $(-1)\omega \wedge \omega$, is equal to its Euler characteristic $\chi(X)$.
\end{lem}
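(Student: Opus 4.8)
The plan is to compute the self-intersection number of a Lagrangian $X$ by relating it to the Euler class of the normal bundle, and then to identify that normal bundle with (a twist of) the tangent bundle using the Lagrangian condition. Recall that for a closed oriented submanifold $X$ in an oriented manifold $M$, the self-intersection number $X\cdot X$ equals $\int_X e(\nu_X)$, the integral of the Euler class of the normal bundle $\nu_X$ of $X$ in $M$; this follows from equation \eqref{eq:form_2_2} together with the standard tubular-neighbourhood identification of $\eta_S$ restricted to $S$ with the Thom class of $\nu_X$. So the entire problem reduces to computing $e(\nu_X)$.

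The key point is the following: if $X \subset (M,\omega)$ is Lagrangian, then the symplectic form gives a bundle isomorphism $\nu_X \cong T^*X$. Indeed, for $v \in T_xX$ the covector $\omega(v,\cdot)$ annihilates $T_xX$ (Lagrangian condition) and hence descends to a well-defined element of $(T_xM/T_xX)^* $; nondegeneracy of $\omega$ makes $v \mapsto \omega(v,\cdot)$ an isomorphism $T_xX \to (T_xM/T_xX)^* = \nu_X^*$, so $\nu_X \cong T^*X \cong TX$ as real vector bundles (the last isomorphism by choosing any Riemannian metric on $X$, which does not affect the Euler class up to sign). One must be careful about orientations: $M$ is oriented by $\omega\wedge\omega$, and the splitting $TM|_X \cong TX \oplus \nu_X \cong TX \oplus T^*X$ carries a canonical orientation; comparing this canonical orientation with $\omega\wedge\omega$ introduces a sign, and tracking it shows that $\nu_X$ should be oriented so that $\int_X e(\nu_X) = \int_X e(TX) = \chi(X)$ precisely when $X$ is oriented by $-\omega\wedge\omega$, which is the convention in the statement. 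This sign bookkeeping is the one place where care is genuinely required.

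Concretely, I would carry out the steps in this order. First, state and invoke the general fact $X\cdot X = \int_X e(\nu_X)$ for a closed oriented submanifold, citing \eqref{eq:form_2_2}. Second, construct the isomorphism $\nu_X \cong T^*X$ from $\omega$ as above, using only the Lagrangian condition and nondegeneracy. Third, identify $T^*X$ with $TX$ as unoriented real $2$-plane bundles via a metric, so that their Euler classes agree up to the orientation choice, and conclude $\int_X e(\nu_X) = \pm \chi(X)$. Fourth, pin down the sign by a local model computation: work in Darboux coordinates near a point of $X$, with $X = \{q_1 = q_2 = 0\}$ and $\omega = dp_1\wedge dq_1 + dp_2\wedge dq_2$, write down the induced orientations on $TX$, $\nu_X$ and on $T_xM$ via $\omega\wedge\omega$, and check directly that the sign works out to $+\chi(X)$ under the $-\omega\wedge\omega$ convention. (Equivalently, one may check the sign on the single example $X = S^2$ — a Lagrangian sphere, e.g. the antidiagonal in $S^2\times S^2$ with a split symplectic form — where $\chi(S^2) = 2$ and the self-intersection is known to be $-2$ for the diagonal orientation, hence $+2$ for the opposite one.)

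The main obstacle I anticipate is not any single hard argument but exactly this orientation/sign reconciliation: the isomorphisms $\nu_X \cong T^*X \cong TX$ are natural only up to the choice of orientations, and the statement is sensitive to the convention "$X$ is oriented by $-\omega\wedge\omega$". Everything else — the reduction to the Euler class, the Lagrangian-to-cotangent isomorphism, the Euler-class equals Euler-characteristic for the tangent bundle of a surface — is standard. So I would budget most of the writing for a clean, unambiguous treatment of the signs, ideally via the explicit Darboux-coordinate model so that no hand-waving about "compatible orientations" is needed.
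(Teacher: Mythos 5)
Your proposal is correct and follows essentially the same route as the paper: the paper invokes Weinstein's Lagrangian neighbourhood theorem to identify a tubular neighbourhood of $X$ with a neighbourhood of the zero section in $(T^{*}X,\mathrm{d}\alpha)$ and then reads off the self-intersection as the index sum of a generic vector field, which is exactly your ``$\nu_X\cong T^{*}X$, so $X\cdot X=\int_X e(\nu_X)=\chi(X)$'' argument, with the same sign bookkeeping (the paper checks that $-\mathrm{d}\alpha\wedge\mathrm{d}\alpha$ induces the standard $(\boldsymbol{q},\boldsymbol{p})$ orientation, matching your Darboux-coordinate verification). The only difference is that you use just the linear bundle isomorphism given by $\omega$ where the paper cites the full Weinstein theorem; both suffice.
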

\begin{proof}
There exists a canonical 1-form $\alpha$ on a cotangent bundle $T^{*}\!X$, the exterior differential of which defines a symplectic form on $T^{*}\!X$. In standard local coordinates $(\boldsymbol{p}, \boldsymbol{q})$, where $\boldsymbol{q} \in \mathbb{R}^2$ and $\boldsymbol{p} \in \mathbb{R}^2$ are the coordinates on the base and on the fiber respectively, the canonical 1-form and its differential are given by the formulas
$$
\displaystyle{ \alpha = p_1\,\text{d}\,q_1 + p_2\,\text{d}\,q_2,\quad \text{d}\,\alpha = \text{d}\,p_1 \wedge \text{d}\,q_1 + \text{d}\,p_2 \wedge \text{d}\,q_2 }
$$
According to Weinstein theorem (see, for example, \cite{makdaff}), there exists a symplectomorphism $\psi\colon U(X^2) \to V(X^2)$ of some neighborhood $U(X^2) \subset M$ of the submanifold $X$ in $M$ to a neighborhood $V(X) \subset T^{*}\!X$ of a zero-section of the foliation $T^{*}\!X$ with the symplectic form $\text{d}\,\alpha$, which transfers $X$ to the zero-section.

The form $(-1)\text{d}\,\alpha \wedge \text{d}\,\alpha$ defines on $T^{*}\!X$ a standard orientation, which is given by the order of the coordinates $(\boldsymbol{q},\boldsymbol{p})$.
For the standard orientation the index of self-intersection is equal to the sum of singular point's indexes of general vector field and is equal to $\chi(X)$.
\end{proof}

\begin{thm}\label{thm1}
Assume that an integrable Hamiltonian system at a 4-dimensional symplectic manifold $M$ has a focus-focus singularity of complexity $n \geq 2$. Then

a) $\pi_2(M) \neq 0$;

b) the dimension of De Rham cohomology group is $\text{\normalfont{dim}}\,H^2(M) \geq n-1$;

c) if the manifold $M$ is compact, then $\text{\normalfont{dim}}\,H^2(M) \geq n$.
\end{thm}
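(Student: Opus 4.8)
The plan is to read off topological information from the necklace of Lagrangian spheres $L_1,\dots,L_n$ that makes up the focus fiber, by computing all of their mutual intersection indices in $H_2(M)$ and transporting this to $H^2$ via Poincaré duality. Write $\eta_i\in H^2(M)$ for the class Poincaré dual to $L_i$ and put $A_{ij}=L_i\cdot L_j=\int_{L_j}\eta_i$. Since $n\ge 2$, each $L_i$ is an \emph{embedded} Lagrangian $2$-sphere, so Lemma~\ref{lem:index} gives $A_{ii}=\chi(S^2)=2$. For the off-diagonal entries I would use a regular Liouville torus $T$ lying close to $L$: it is disjoint from the singular fiber, so $T\cdot L_i=0$ for every $i$; it is a Lagrangian torus, so $T\cdot T=\chi(\mathbb{T}^2)=0$, again by Lemma~\ref{lem:index}; and, since $L$ is obtained from $T$ by pinching $n$ parallel cycles, one has $[T]=[L_1]+\dots+[L_n]$ in $H_2(M)$. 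Expanding $0=T\cdot T=\sum_{i,j}A_{ij}$ and using $A_{ii}=2$ shows that $\sum_{i\ne j}A_{ij}=-2n$. For $n\ge 3$ the only nonzero off-diagonal entries occur between neighbouring spheres, so $\sum_i A_{i,i+1}=-n$ (indices mod $n$); each $A_{i,i+1}=\pm1$ since $L_i$ and $L_{i+1}$ meet in a single transverse point, hence every $A_{i,i+1}=-1$. For $n=2$ the same count gives $A_{12}=-2$. In all cases $A$ is the affine Cartan matrix of type $\widetilde A_{n-1}$: it is positive semidefinite, its kernel is spanned by $(1,\dots,1)$, and $\operatorname{rank}A=n-1$.

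Part (a) is then immediate. Since $L_1\cdot L_1=2\ne 0$, the class $[L_1]$ is nonzero in $H_2(M;\mathbb{R})$ — a null-homologous cycle has zero self-intersection, since then $\int_{L_1}\eta_{L_1}$ would vanish, while by Lemma~\ref{lem:index} it equals $\chi(S^2)=2$. But $L_1$ is literally a map of $S^2$ into $M$, so $[L_1]$ lies in the image of the Hurewicz homomorphism $\pi_2(M)\to H_2(M;\mathbb{Z})$; as this image is nonzero, $\pi_2(M)\ne 0$.

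For part (b) I would use the pairing $H^2(M;\mathbb{R})\times H_2(M;\mathbb{R})\to\mathbb{R}$. The composite $\mathbb{R}^n\to H^2(M;\mathbb{R})\to\mathbb{R}^n$, where the first arrow sends $e_i\mapsto\eta_i$ and the second sends $\alpha\mapsto\bigl(\int_{L_1}\alpha,\dots,\int_{L_n}\alpha\bigr)$, is exactly multiplication by $A$; hence $n-1=\operatorname{rank}A\le\dim\operatorname{span}\{\eta_1,\dots,\eta_n\}\le\dim H^2(M)$. For part (c), with $M$ closed the intersection form $Q(\alpha,\beta)=\int_M\alpha\wedge\beta$ on $H^2(M;\mathbb{R})$ is non-degenerate and satisfies $Q(\eta_i,\eta_j)=A_{ij}$. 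Let $W=\operatorname{span}\{\eta_1,\dots,\eta_n\}$; by (b), $\dim W\ge n-1$, and if $\dim W=n$ we are done, so assume $\dim W=n-1$. Pulling $Q|_W$ back along the surjection $\mathbb{R}^n\twoheadrightarrow W$ recovers $A$, whence $\dim\ker(Q|_W)=\dim\ker A-1=0$, i.e.\ $Q|_W$ is non-degenerate. On the other hand $Q([\omega],\eta_i)=\int_M\omega\wedge\eta_{L_i}=\int_{L_i}\omega=0$ because $L_i$ is Lagrangian, so $[\omega]$ is $Q$-orthogonal to $W$; since $Q([\omega],[\omega])=\int_M\omega\wedge\omega\ne 0$, the class $[\omega]$ is nonzero, and a nonzero vector orthogonal to a non-degenerate subspace cannot belong to it. Therefore $\dim H^2(M)\ge\dim W+1=n$.

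The step I expect to be the real obstacle is identifying the off-diagonal part of $A$ — equivalently, showing that the chain of spheres carries the intersection form of the \emph{affine} diagram $\widetilde A_{n-1}$ (rank $n-1$) rather than a positive-definite one, which would give the stronger bound $\dim H^2(M)\ge n$ even in the open case. This hinges on the homological identity $[T]=[L_1]+\dots+[L_n]$, i.e.\ on the fact that a focus fiber is homologous to a nearby regular fibre; I would either invoke the structure theory of focus-focus neighbourhoods (see \cite{bolsinov, matveev, igs1}) or verify it directly from the local model $f_1+i f_2=\bar z w$ together with the pinched-torus description of $L$. Everything downstream is linear algebra together with Lemma~\ref{lem:index}.
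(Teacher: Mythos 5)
Your argument is sound and reaches all three conclusions, but it takes a genuinely different route through part (b), and the one step you yourself flag as ``the real obstacle'' --- the homological identity $[T]=[L_1]+\dots+[L_n]$ --- is precisely the ingredient the paper's proof does not need. The paper never determines the off-diagonal intersection numbers globally: it simply discards one sphere from the necklace, orients the remaining \emph{open} chain $L_1,\dots,L_{n-1}$ so that consecutive spheres meet with index $+1$, and observes that the resulting Gram matrix (tridiagonal, $2$ on the diagonal, $\pm 1$ on the adjacent entries) is nonsingular --- it is sign-conjugate to the positive-definite Cartan matrix of type $A_{n-1}$ --- so the $n-1$ dual classes are already independent. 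The point is that for the rank bound the signs $A_{i,i+1}=\pm1$ are irrelevant: conjugating by a diagonal $\pm1$ matrix along a \emph{path} (as opposed to a cycle) normalizes them arbitrarily, so there is nothing to compute. Your route instead pins down the full cyclic matrix as $\widetilde{A}_{n-1}$, which is a stronger and genuinely interesting statement (it also explains why the bound in (b) cannot be improved to $n$ from the spheres alone), but it costs you the identity $[T]=\sum_i[L_i]$, which you leave to ``structure theory of focus-focus neighbourhoods'' or the local model. That identity is true and citable (e.g.\ from \cite{matveev} or \cite{symington}), so I would not call this a fatal gap, but as written your proof of (b) is conditional on it, whereas the paper's is self-contained given Lemma~\ref{lem:index}.

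Two smaller remarks. In (c) your case analysis ($\dim W=n$ versus $\dim W=n-1$ with $Q|_W$ nondegenerate) again leans on knowing $\mathrm{rank}\,A=n-1$ exactly; it can be short-circuited: if $[\omega]=\sum_i c_i\eta_i$ then $0\neq\int_M\omega\wedge\omega=\sum_i c_i\int_{L_i}\omega=0$ since each $L_i$ is Lagrangian --- which is exactly the paper's argument that $[\omega]$ avoids the span of the $\eta_i$, with no nondegeneracy needed. Part (a) is the same as the paper's, with your Hurewicz remark making explicit what the paper states as ``not homotopic to zero.''
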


\begin{proof}
Let us orient the manifold $M$ using the form $(-1)\omega \wedge \omega$.
Then the self-intersection index of the Lagrangian submanifold $X \subset M$ is equal to its Euler characteristic $X \cdot X = \chi(X)$.

a) The singular fiber of the focus-focus singularity contains a Lagrangian sphere.
According to Lemma \ref{lem:index}, this Lagrangian sphere has a non-zero index of self-intersection and, hence, it is not homotopic to zero.

b) Consider a sequence of spheres $L_1,\ldots,L_n$ of the singular fiber of the focus type. Denote as $[\varphi_i]$ the classes, which are Poincare dual to the spheres $L_i$. Since the sphere is Lagrangian, its index of self-intersection is equal to +2. Then we obtain an equality
$$
\int_{L_{i}} \varphi_i  = 2.
$$
from the formula (\ref{eq:form_2_2}).

Thus, the classes $[\varphi_i]$ are non-trivial. Let us prove that they form a subspace of dimension no less than $n-1$ in $H^2(M)$. Exclude one sphere from the sequence. Let us orient spheres of a new sequence $L_1,\ldots,L_{n-1}$, which is not closed, in a such way that the index of intersection of the neighboring spheres becomes equal to +1. It is suffice to show that there exists no linear combination of classes $[\varphi_1],\ldots,[\varphi_{n-1}]$, such that the integral of it taken over any of the spheres from the sequence is equal to zero.
Using the integration over the spheres $L_i$, we obtain a system of equations for the coefficients
\begin{equation*}
\left\{
\begin{array}{l}
2 \lambda_1 + \lambda_2 = 0\\
\lambda_{i-1} + 2 \lambda_{i} + \lambda_{i+1} = 0,\ i = 2,\ldots,n-1\\
\lambda_{n-1} + 2\lambda_n = 0,
\end{array}
\right.
\end{equation*}
which has no nontrivial solution.

c) Consider a multiplication mapping on the symplectic form
$$
\wedge\,\omega: H^2(M) \rightarrow H^4(M) = \mathbb{R}
$$
According to the formula (\ref{eq:form_2_1}), all classes $\left[ \varphi_i \right]$ lie in the kernel of this mapping
$$
\int_M \omega \wedge \varphi_i = \int_{S_i} \omega = 0,
$$
and the image of the symplectic form is the whole space $H^4(M)$, because $\omega \wedge \omega$ is a volume form, which is not exact according to the compactness of $M$.
So, the classes $[\varphi_i]$, which are dual to the Lagrangian spheres of the singular fiber, do not form the cohomology class of the symplectic form and, hence, $\text{dim}\,H^2(M) \geq n$.
\end{proof}

Consider some examples of using this theorem. Note, that in the examples below the estimation of the complexity of the focus-focus singularity, obtained from the theorem \ref{thm1}, is not exact, but we can improve it by using other topological invariants.

\textbf{Examples:}
\begin{itemize}

\item $M = \mathbb{CP}^2$

Since $H^2(\mathbb{CP}^2) = \mathbb{R}$, then from (c) it results that $\mathbb{CP}^2$ admits only simple focus-focus singularities.

\item $M = M^{2}_{g_{1}} \times M^{2}_{g_{2}}$ (Cartesian product of spheres with handles)

If $g_1$ and $g_2$ are greater than zero, then $\pi_2(M^{2}_{g_{1}} \times M^{2}_{g_{2}}) = 0$. Then from (a) we obtain that the focus-focus singularities of $M$ are simple.

Assume that $g_2 = 0$ and $g_1 = g \neq 0$, that is $M^4 = M^{2}_{g} \times S^2$.
Since the group $H^2(M^{2}_{g} \times S^2) = \mathbb{R}^2$, then from (c) we have that in this case there are no singularities of complexity greater than 2.
Further, the group $\pi_2 (M^{2}_{g} \times S^2) = \mathbb{Z}$. 
Hence, the cohomology class of the sphere should have a zero-index of self-intersection. Thus, there are no complex focus-focus singularities in $M^{2}_{g} \times S^2$, while $g \neq 0$.

The most interesting case is the case, when $g_1 = g_2 = 0$, that is $M^4 = S^2 \times S^2$. Here $H^2(M^4) = \mathbb{R}^2$. From (c) we again obtain that there are no singularities of complexity of 3 or greater than 3. Can in this case exist singularities of complexity 2?
Let us orient the factors in some way, and define on $M^4$ the orientation given by the direct product. Let $a, b$ be the generators of group $H_2(S^2 \times S^2, \mathbb{Z}) = \mathbb{Z}^2$, implemented by the factors. For any cohomology class $\alpha a + \beta b$ we calculate the index of self-intersection
$$
(\alpha a + \beta b)\cdot(\alpha a + \beta b) = 2 \alpha \beta.
$$
We will assume here that the orientation, given by the form $(-1)\, \omega \wedge \omega$, is the opposite to the orientation of the direct product. In this case the index of self-intersection of the Lagrangian sphere is equal to ($-$2) and, hence, it is realized only by the class $a - b$. Then
\begin{equation}\label{eq:form_2_3}
\int_{a-b} \omega = \int_{a} \omega - \int_{b} \omega = 0.
\end{equation}

The integral of the symplectic form taken over the given class is equal to zero if and only if the areas of the spheres are the same. Condition (\ref{eq:form_2_3}) is the necessary condition of the existence of the focus-focus singularities of complexity 2 in $S^2 \times S^2$.
\item $M = M^{2}_{g} \times \mathbb{R}^2$ (Cartesian product of a sphere with handles and a plane)

When $g\neq 0$, we have $\pi_2(M^{2}_{g} \times \mathbb{R}^2) = 0$, and from (a) we obtain that
$M$ does not admit complicated focus-focus singularities.

When $g = 0$, the group $H^2(S^2 \times \mathbb{R}^2) = \mathbb{R}$. So, from (b) we have that there can not exist singularities of complexity more then 2 in $S^2 \times \mathbb{R}^2$.
Any sphere in $M$ is homotopic to some degree of factor-sphere and, hence, it has a zero-index of self-intersection. Finally we obtain that the manifold $S^2 \times \mathbb{R}^2$
admits only simple focus-focus singularities.
\end{itemize}

\begin{remark}
In contrast to the complicated singularities the simple focus-focus singularities are local, that is they can be realized in the space $(\mathbb{R}^4,\omega)$ with a standard symplectic form.
As an example we will mention the following pair of functions
$$
\displaystyle{H = \frac12 \left( p_{1}^2 + p_{2}^2 \right) + \left( q_{1}^2 + q_2^2 - 1 \right)^2,\ F= p_1 q_2 - p_2 q_1 }
$$
$$
\omega = \text{d}\,p_1 \wedge \text{d}\,q_1 + \text{d}\,p_2 \wedge \text{d}\,q_2
$$
The point $(0,0,0,0)$ is a singular point of the type focus-focus. The compactness of the focus singular fiber follows from the compactness of the energy levels $H=\text{const}$. Thus, there does not exist a topological obstruction to the existence of a simple focus-focus singularity.
\end{remark}

\section*{Applications to mechanics}\label{sec3}
In classical mechanical systems the phase is a cotangent bundle $T^{*}\! M^2$ to a 2-manifold $M^2$ (configuration space).
Assume that the configuration space is closed (compact and has no boundary), that is it is a sphere $M^2_g$ with $g$ handles in the orientable case, and it is a sphere $N^2_{\mu}$ with $\mu$ Mebius strips in the non-orientable case. Consider the following symplectic form on the phase space:
\begin{equation}\label{eq:sympl_form_eq}
\omega = \text{d}\,\alpha + \pi^{*} \varkappa.
\end{equation}
Here we denoted the standard symplectic form on the cotangent bundle as $\text{d}\,\alpha$, $\pi:T^{*}\!M^2 \rightarrow M^2$ is a projection mapping, and  $\varkappa$ is a closed 2-form on $M^2$ (the gyroscopic forces's form or magnetic term; see \cite{kharlamov1} for details).
\begin{thm}\label{thm2}
Consider an integrable Hamiltonian system on the cotangent bundle $T^{*}\!M^2$ to a 2-dimensional closed manifold $M^2$. The symplectic form $\omega$ on $T^{*}\!M^2$ has a form $\omega = \text{d}\,\alpha + \pi^{*}\varkappa.$ Then we have some cases:

1) If $M^2 = M^2_g$ and $g > 0$, or $M^2 = N^2_{\mu}$, then the system admits only simple focus-focus singularities.

2) If $g = 0$, then the system admits the singularities of complexity 1 and 2. The singularities of complexity 2 can appear in the system if and only if the form $\varkappa$ is exact.
\end{thm}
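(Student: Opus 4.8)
The plan is to read off all of Theorem~\ref{thm2} from Theorem~\ref{thm1}, using only that $T^{*}M^{2}$ deformation retracts onto its zero section, so that $\pi_{k}(T^{*}M^{2})\cong\pi_{k}(M^{2})$ for every $k$ and $\pi^{*}$ is an isomorphism $H^{k}(M^{2};\mathbb{R})\to H^{k}(T^{*}M^{2};\mathbb{R})$. For case~1, if $M^{2}=M^{2}_{g}$ with $g>0$ I note that $M^{2}_{g}$ is aspherical (universal cover $\mathbb{R}^{2}$), so $\pi_{2}(T^{*}M^{2}_{g})=\pi_{2}(M^{2}_{g})=0$, and Theorem~\ref{thm1}(a) forbids complexity $\geq 2$; if $M^{2}=N^{2}_{\mu}$ I use that $H_{2}(N^{2}_{\mu};\mathbb{Z})=0$ while $H_{1}$ is finitely generated, so $H^{2}(N^{2}_{\mu};\mathbb{R})=0$ and hence $\dim H^{2}(T^{*}N^{2}_{\mu};\mathbb{R})=0$, whence Theorem~\ref{thm1}(b) leaves only complexity $\leq 1$. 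In both cases only simple singularities survive, and a simple one is realizable since it is a local object living in a Darboux chart (cf.\ the Remark).

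For case~2, $M^{2}=S^{2}$, I first observe $\dim H^{2}(T^{*}S^{2};\mathbb{R})=\dim H^{2}(S^{2};\mathbb{R})=1$, so Theorem~\ref{thm1}(b) bounds the complexity by $2$; complexity $1$ occurs by a local model, so the only remaining point is the criterion for complexity $2$. For the necessity direction I argue as follows: if the focus fiber has complexity $2$, it contains an embedded Lagrangian sphere $L_{1}$, hence $\int_{L_{1}}\omega=0$; by Lemma~\ref{lem:index}, $L_{1}\cdot L_{1}=\chi(S^{2})=2\neq 0$, so $[L_{1}]\neq 0$ in $H_{2}(T^{*}S^{2};\mathbb{R})\cong\mathbb{R}$ and in fact spans it. Then $\gamma\mapsto\int_{L_{1}}\gamma$ is an injective functional on $H^{2}(T^{*}S^{2};\mathbb{R})$, so $\int_{L_{1}}\omega=0$ forces $[\omega]=0$; since $[\omega]=\pi^{*}[\varkappa]$ with $\pi^{*}$ an isomorphism, this means $[\varkappa]=0$, i.e.\ $\varkappa$ is exact.

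For the sufficiency direction I reduce to the standard cotangent bundle: if $\varkappa=d\beta$, the fiberwise translation $\tau_{\beta}(q,p)=(q,\,p+\beta_{q})$ satisfies $\tau_{\beta}^{*}\alpha=\alpha+\pi^{*}\beta$, hence $\tau_{\beta}^{*}(d\alpha)=d\alpha+\pi^{*}\varkappa=\omega$, so $(T^{*}S^{2},\omega)$ is symplectomorphic to $(T^{*}S^{2},d\alpha)$. It then suffices to produce one integrable system on $(T^{*}S^{2},d\alpha)$ carrying a complexity-$2$ focus-focus singularity, for which I would invoke the model construction of \cite{matveev} (verifying that its $n=2$ instance is supported on $T^{*}S^{2}$ with an exact symplectic form) or a known mechanical example possessing a complex focus-focus point. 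Its singular fiber will be the union of the zero section and a Dehn-twisted copy of it, two embedded Lagrangian spheres meeting transversally at two points whose classes are $\pm$ the generator of $H_{2}(T^{*}S^{2};\mathbb{R})$ — precisely the configuration left open by the computation above.

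The step I expect to be the genuine obstacle is this last one: exhibiting an actual integrable Hamiltonian system on $T^{*}S^{2}$ that realizes a complexity-$2$ focus fiber, i.e.\ checking that the semi-local model of such a fiber is carried by $T^{*}S^{2}$ with an exact symplectic structure and extends to a global system with compact energy levels. Everything else — the homotopy type of $T^{*}M^{2}$, the application of Lemma~\ref{lem:index}, the identity $[\omega]=\pi^{*}[\varkappa]$, and the invocation of Theorem~\ref{thm1} — is routine bookkeeping.
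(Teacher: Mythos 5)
Your proposal is correct and follows essentially the same route as the paper: contract $T^{*}M^{2}$ to the zero section, apply Theorem~\ref{thm1}(a) for $g>0$ and Theorem~\ref{thm1}(b) when $H^{2}=0$ or $\dim H^{2}=1$, and derive exactness of $[\omega]=\pi^{*}[\varkappa]$ from $\int_{S}\omega=0$ together with $S\cdot S=2$ (the paper writes this as $[\omega]=k[\varphi]$ and concludes $2k=0$). The one piece you flag as the genuine obstacle --- exhibiting an actual integrable system on $T^{*}S^{2}$ with exact $\omega$ realizing a complexity-$2$ focus fiber --- is likewise not contained in the paper's proof of the theorem; it is supplied separately by the explicit $e(3)^{*}$ example ($H=\tfrac12(m_1^2+m_2^2+m_3^2)+q_3^2$, $G=m_3$ on the orbit $m=0$, which is $T^{*}S^{2}$ with exact symplectic form) in the final section, so your deferral matches the paper's own structure.
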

\begin{proof}

1) The phase space $T^{*}\!M^2$ contracts to the configuration space $M^2$.
The groups $\pi_2(M^2_g), \pi_2(N^2_{\mu})$ are non-trivial only in the case when $g = 0$ (sphere) and $\mu = 1$ (projective plane). In the case $M^2 = \mathbb{RP}^2$ we have $H^2(\mathbb{RP}^2) = 0$.
Now the statement follows from the points (a) and (b) of the theorem \ref{thm1}.

2) Since $H^2(S^2) = \mathbb{R}$, then from the point (b) of the theorem \ref{thm1} we obtain that the system does not admit singularities of complexity more than 2. Denote as $[\varphi]$ the dual class to one of the spheres $S$ of the singular fiber of a complex focus-focus singularity and as $[\omega]$ the cohomology class of the symplectic form. Then
\begin{equation}\label{eq:form_3_1}
[\omega] = k [\varphi]
\end{equation}
From the formula (\ref{eq:form_2_2}) and lemma \ref{lem:index} we have:
$
\displaystyle{\int_S \varphi = S\cdot S = 2}.
$
Since the sphere $S$ is Lagrangian,
$
\displaystyle{\int_S \omega = 0}.
$
Now from the condition (\ref{eq:form_3_1}) it follows that
$$
\displaystyle{\int_S \omega = k \int_S \varphi = 2 k = 0}.
$$
Thus, the coefficient $k$ is equal to zero, and the symplectic form is exact.
\end{proof}

Other series of integrable systems, interesting from the point of view of physics, is provided to us by the systems on the spaces dual to Lie algebras. Let $\mathfrak{g}$ be a finite-dimensional Lie algebra, and $\mathfrak{g}^{*}$ is a space of linear functions on $\mathfrak{g}$. Consider a basis
$e_1,\ldots,e_n$ of the algebra $\mathfrak{g}$. Let $c^{k}_{ij}$ be the structure constants of the algebra $\mathfrak{g}$ in this basis:
$$
\displaystyle{ \left[ e_i, e_j \right] = \sum_{k} c^{k}_{ij} e_k. }
$$
Consider linear coordinates $x_1,\ldots,x_n$ on $\mathfrak{g}^{*}$, corresponding to the basis $e_1,\ldots,e_n$. 
A natural Poisson bracket is defined on $\mathfrak{g}^{*}$ by the formula
$$
\displaystyle{ \left\{ f, g \right\} = \sum_{i,j,k} c^{k}_{ij} x_k \frac{\partial f}{\partial x_i} \frac{\partial g}{\partial x_j},\quad f,g \in C^{\infty}(\mathfrak{g}^{*}).}
$$
It is called a \emph{Lie-Poisson bracket} for the algebra $\mathfrak{g}$.

For an arbitrary smooth function $H$ on $\mathfrak{g}^{*}$ we can consider the equations
\begin{equation}\label{eq:coalgebra_eq}
\displaystyle{ \dot{x}_{i} = \left\{x_i, H\right\} },
\end{equation}
which are called the \emph{Euler equations} on $\mathfrak{g}^{*}$ with Hamiltonian function $H$.

The fibration is defined on the space $\mathfrak{g}^{*}$, the fibers of which are the orbits of the coadjoint representation of a Lie group. The restriction of the Lie-Poisson bracket on the given orbits is nondegenerate and, hence, it defines a natural symplectic form on these orbits. See \cite{olver} for details.

In the case when the orbits have a dimension 4, the integrability of the system (\ref{eq:coalgebra_eq}) means the existence of the integral, which does not depend on $H$ on the orbits. Let us restrict the hamiltonian and the additional integral on a certain base. Then we obtain an ordinary Hamiltonian system with two degrees of freedom and we get the opportunity to investigate the question of the existence of complicated focus-focus singularities in this system.

Many dynamical systems, describing some mechanical and physical processes, can be written as Hamiltonian systems on the dual spaces to Lie algebras. For example, Kirchhoff equations can be represented as a Hamiltonian system on $e(3)^{*}$. The problem of the motion by inertia of 4-dimensional rigid body can be written using the equations on $so(4)^{*}$. Let us consider two these cases in detail.

\subsection*{The case $\mathfrak{g} = e(3)$}
The Lie-Poisson bracket on $e(3)^{*}$, written in the coordinates $m_1, m_2, m_3, q_1, q_2, q_3$, has a form
\begin{equation}\label{eq:lie_bracket_e3}
\displaystyle{ \left\{ m_i, m_j \right\} = \sum_{k} \varepsilon_{ijk} m_k,\ \left\{ m_i, q_j \right\} = \sum_{k} \varepsilon_{ijk} q_k,\ \left\{ q_i, q_j \right\} = 0,}
\end{equation}
and the orbits of the coadjoint representaion are given by the equations
$$
\displaystyle{ \mathcal{O}_{q,m} = \left\{ f_1 = q_{1}^{2} + q_{2}^{2} + q_{3}^{2} = q^2,\ f_2 = m_1 q_1 + m_2 q_2 + m_3 q_3 = m q \right\}.}
$$
\begin{thm}\label{thm:thm_3_1}
In the integrable Hamiltonian systems on the orbits of the coadjoint representation on $e(3)^{*}$ can appear only simple focus-focus singularities, except, perhaps, the orbits with $m=0$.
On such orbits the focus-focus singularities have a complexity not greater than 2.
\end{thm}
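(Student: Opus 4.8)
The plan is to determine, for each four-dimensional coadjoint orbit, its diffeomorphism type and the de Rham class of its symplectic form, and then apply Theorem~\ref{thm1} together with Lemma~\ref{lem:index}. First I would note that a four-dimensional orbit must have $f_1 = q^2 > 0$, since $q^2 = 0$ forces $q = 0$ and the orbit collapses to a two-sphere. So fix $a = q^2 > 0$, $b = mq$, and consider $\mathcal{O} = \{\,|q|^2 = a,\ m\cdot q = b\,\}$. Over each point of the sphere $\{|q|^2 = a\}\cong S^2$ the admissible momenta form an affine plane, so $\mathcal{O}$ is an affine $\mathbb{R}^2$-bundle over $S^2$; as every affine bundle over a paracompact base admits a section, $\mathcal{O}$ is isomorphic, as a bundle, to its underlying vector bundle $TS^2\cong T^{*}S^2$. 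Hence $\pi_2(\mathcal{O}) = \mathbb{Z}$ and $H^2(\mathcal{O};\mathbb{R}) = \mathbb{R}$, so part (b) of Theorem~\ref{thm1} already forces complexity $n\le 2$ for any focus-focus singularity on $\mathcal{O}$; this proves the assertion for the orbits with $m = 0$.

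To handle the orbits with $b = mq\neq 0$ I would argue that complexity $2$ is impossible there. Suppose, to the contrary, that such a singularity exists, and orient $\mathcal{O}$ by $(-1)\,\omega\wedge\omega$. The focus fiber then contains an embedded Lagrangian two-sphere $S$, and Lemma~\ref{lem:index} gives $S\cdot S = \chi(S^2) = 2$. Now $H_2(\mathcal{O};\mathbb{Z}) = \mathbb{Z}$ is generated by the class $\sigma_0$ of any section of the affine bundle (two sections are homologous through the affine homotopy), and $\sigma_0\cdot\sigma_0 = \pm 2$, being $\pm$ the Euler number of $TS^2$. Writing $[S] = k\sigma_0$ and comparing self-intersections forces $\sigma_0\cdot\sigma_0 = +2$ and $[S] = \pm\sigma_0$ (if $\sigma_0\cdot\sigma_0$ were $-2$ no Lagrangian sphere would exist, contradicting the existence of the focus fiber). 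Since $S$ is Lagrangian, $0 = \int_S\omega = \pm\int_{\sigma_0}\omega$, so $[\omega] = 0$ in $H^2(\mathcal{O})$.

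It remains to contradict this by computing the symplectic area of $\sigma_0$, and this is the one genuinely computational step. I would take the explicit representative $\Sigma = \{\,|q|^2 = a,\ m = (b/a)\,q\,\}$, a section of the affine bundle. Evaluating the Lie--Poisson (Kirillov--Kostant--Souriau) form of the bracket (\ref{eq:lie_bracket_e3}) on $\Sigma$: tangent vectors at $(\,(b/a)q,\,q\,)$ have the form $w = \xi\times q$ with $w\perp q$, and on such vectors $\omega$ evaluates to a fixed nonzero multiple of $q\cdot(w_1\times w_2)$, i.e. to a multiple of the Euclidean area element of the sphere $\{|q|^2 = a\}$; integrating yields $\int_{\Sigma}\omega = \pm\,4\pi\,b/\sqrt{a}$, which is nonzero precisely because $b = mq\neq 0$. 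This contradicts $\int_{\sigma_0}\omega = 0$, so no complexity-$2$ focus-focus singularity can live on an orbit with $m\neq 0$, leaving only simple ones. The heart of the matter is thus the identification of $[\omega]\in H^2(\mathcal{O})$ --- equivalently, that the Kirillov form on $\{|q|^2 = a,\ m\cdot q = b\}$ is exact exactly when $b = 0$; the rest is elementary topology plus Theorem~\ref{thm1} and Lemma~\ref{lem:index}.
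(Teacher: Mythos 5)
Your proposal is correct and follows essentially the same route as the paper: identify the regular orbit with $T^{*}\!S^2$, use $H^2(\mathcal{O})=\mathbb{R}$ and Theorem~\ref{thm1} to bound the complexity by $2$, and then rule out complexity $2$ for $m\neq 0$ by comparing the self-intersection $+2$ and zero symplectic area of the Lagrangian sphere with the fact that $[\omega]$ integrates to $4\pi m$ over the generator of $H_2$. The only differences are presentational: you inline the argument that the paper delegates to part~2 of Theorem~\ref{thm2}, and you compute the symplectic area of a section directly where the paper cites Novikov--Shmel'tser.
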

\begin{proof}
Every regular ($q \neq 0$) orbit $\mathcal{O}_{q,m}$ is diffeomorphic to a cotangent bundle of the sphere $T^{*}\!S^2$. By changing the values of $m$ and $q$, we obtain different symplectic forms $\omega_{q,m}$ of the form (\ref{eq:sympl_form_eq}) on $T^{*}\!S^2$ and, hence, we can apply the theorem \ref{thm2}.
The cohomology class of the 2-form $\omega_{q,m}$, defining the symplectic structure,
has the form (see details in \cite{novikov})
$$
\displaystyle{\int_{S^2} \omega^{2}_{q,m} = 4 \pi m}.
$$
and vanishes only on orbits $m = 0$.

According to the paragraph 2 of the theorem \ref{thm2}, the manifold $T^{*}\!S^2$ does not admit focus-focus singularities of complexity more than 2, and the singularities of complexity 2 can appear only in the case when the symplectic form is exact, that is only if $m=0$.
\end{proof}

\subsection*{The case $\mathfrak{g} = so(4)$}
Consider coordinates $m_1, m_2, m_3, q_1, q_2, q_3$ on $so(4)^{*}$. The Lie-Poisson brackets, forming the basis, are defined as follows:
$$
\displaystyle{ \left\{ m_i, m_j \right\} = \sum_{k} \varepsilon_{ijk} m_k,\  \left\{ m_i, q_j \right\} = \sum_{k} \varepsilon_{ijk} q_k,\ \left\{ q_i, q_j \right\} = \sum_{k} \varepsilon_{ijk} m_k}.
$$
A pair of functions
\begin{equation}\label{eq:orbit_s04_1}
\displaystyle{ \mathcal{O}_{c, m} = \left\{
f_1 = q_{1}^{2} + q_{2}^{2} + q_{3}^{2} + m_{1}^{2} + m_{2}^{2} + m_{3}^{2} = c^2,\ f_2 = m_1 q_1 + m_2 q_2 + m_3 q_3 = m c
\right\} }
\end{equation}
sets a fibration on $so(4)^{*}$ with the fibers, which are the 4-dimensional orbits of the coadjoint representation.
\begin{thm}\label{thm:thm_3_2}
In integrable Hamiltonian systems on the orbits of the coadjoint representation on $so(4)^{*}$ can appear only simple focus-focus singularities, except, perhaps, the orbits with $m=0$.
On these orbits the focus-focus singularities have a complexity not greater than 2.
\end{thm}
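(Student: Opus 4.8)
The plan is to reduce the statement to the analysis of $S^2\times S^2$ carried out in the examples above, using the classical isomorphism $so(4)\cong so(3)\oplus so(3)$. First I would introduce on $so(4)^*$ the coordinates $a_i=\tfrac12(m_i+q_i)$, $b_i=\tfrac12(m_i-q_i)$, $i=1,2,3$. A short computation with the listed brackets gives $\{a_i,a_j\}=\sum_k\varepsilon_{ijk}a_k$, $\{b_i,b_j\}=\sum_k\varepsilon_{ijk}b_k$ and $\{a_i,b_j\}=0$, so that $so(4)^*$ is the direct Poisson product of two copies of $so(3)^*$. The two Casimirs become $|\boldsymbol a|^2$ and $|\boldsymbol b|^2$, and in terms of the functions in (\ref{eq:orbit_s04_1}) one finds $|\boldsymbol a|^2=\tfrac14 c(c+2m)$ and $|\boldsymbol b|^2=\tfrac14 c(c-2m)$. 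Consequently every four-dimensional orbit $\mathcal O_{c,m}$ (necessarily $c\neq0$ and $|\boldsymbol a|,|\boldsymbol b|>0$) is diffeomorphic to $S^2\times S^2$, with symplectic form equal to the sum of the Kirillov--Kostant--Souriau forms on the two spherical factors; orbits with $|\boldsymbol a|=0$ or $|\boldsymbol b|=0$ are two-dimensional and need not be considered.

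Next I would identify the cohomology class of this form, exactly as in the proof of Theorem \ref{thm:thm_3_1}: the symplectic area of a coadjoint sphere of radius $\rho$ in $so(3)^*$ is $4\pi\rho$, so the two factors of $\mathcal O_{c,m}$ have symplectic areas $2\pi\sqrt{c(c+2m)}$ and $2\pi\sqrt{c(c-2m)}$, which are equal if and only if $m=0$. Since $\mathcal O_{c,m}$ is compact and $\dim H^2(S^2\times S^2)=2$, part (c) of Theorem \ref{thm1} already forbids focus-focus singularities of complexity $\geq 3$; this yields the claimed bound on the orbits with $m=0$.

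Finally I would rule out complexity $2$ for $m\neq0$, repeating the $S^2\times S^2$ argument around (\ref{eq:form_2_3}). A complexity-$2$ singular fiber contains a Lagrangian sphere $L$; writing $L=\alpha a+\beta b$ in $H_2(S^2\times S^2,\ZZ)$, with $a,b$ the classes of the factors oriented so that $\int_a\omega,\int_b\omega>0$, the equality $\int_L\omega=0$ forces $\alpha$ and $\beta$ to have opposite signs, while Lemma \ref{lem:index} and (\ref{eq:form_2_2}) give $|L\cdot L|=|\chi(S^2)|=2$, hence $|\alpha\beta|=1$; thus $L=\pm(a-b)$ and $\int_a\omega=\int_b\omega$, which by the previous paragraph means $m=0$ --- a contradiction. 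I expect the only genuine work to be the bookkeeping in the middle step: computing the symplectic class of $\mathcal O_{c,m}$ correctly, and checking the orientation convention (that $(-1)\,\omega\wedge\omega$ induces the orientation opposite to the product one on $S^2\times S^2$, as was assumed in the example), after which the argument is routine.
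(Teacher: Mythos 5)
Your proposal is correct and follows essentially the same route as the paper: the splitting $so(4)^*\cong so(3)^*\oplus so(3)^*$ (your $a_i,b_i$ are, up to normalization, the paper's $p_i,s_i$), identification of the generic orbit with $S^2\times S^2$ carrying the sum of the two Kirillov forms, and reduction to the equality of the symplectic areas of the factors, which holds iff $m=0$. The only difference is cosmetic — you carry out the Lagrangian-sphere class computation explicitly, while the paper simply cites its earlier $S^2\times S^2$ example.
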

\begin{proof}
The proof is based on using the well-known isomorphism of algebras $so(4)$ and $so(3) \oplus so(3)$. In the coordinates $s_1, s_2, s_3, p_1, p_2, p_3$ on $so(3)^{*} \oplus so(3)^{*}$ the Poisson brackets have a form
$$
\displaystyle{ \left\{ s_i, s_j \right\} = \sum_{k} \varepsilon_{ijk} s_k,\ \left\{ s_i, p_j \right\} = 0,\ \left\{ p_i, p_j \right\} = \sum_{k} \varepsilon_{ijk} p_k, }
$$
and the isomorphism is defined by the formulas
$$
\displaystyle{ m_i = s_i + p_i,\ q_i = p_i - s_i.}
$$
The equations for the orbits of the coadjoint representation in coordinates $s_i, p_i$ have a simple form:
\begin{equation}\label{eq:orbit_s04_2}
s_{1}^{2} + s_{2}^{2} + s_{3}^{2} = s^2,\quad p_{1}^{2} + p_{2}^{2} + p_{3}^{2} = p^2.
\end{equation}

From these formulas we can conclude that the orbits $\mathcal{O}_{s, p}$ in a general position are diffeomorphic to $S^2 \times S^2$.
Each of the equations (\ref{eq:orbit_s04_2}) defines a sphere in $so(3)^{*}$. These spheres are the orbits of the coadjoint representation of $so(3)$, and they have a natural symplectic form.
Assume that $\omega_s$ and $\omega_q$ are symplectic forms on the spheres $S^{2}_{s}$ and $S^{2}_{p}$ respectively. Then the symplectic form $\omega_{s, p}$ on $\mathcal{O}_{s, p} = S^{2}_{s} \times S^{2}_{p}$ has a form
$$
\omega_{s, p} = \pi_{s}^{*} \omega_s + \pi_{p}^{*} \omega_p,
$$
where the mappings $\pi_s$ and $\pi_p$ are projections on the factors.

Let us orient the spheres $S^{2}_{s}$ and $S^{2}_{p}$ by the forms $\omega_s$ and $\omega_p$ respectively. Then the orientation of the Cartesian product on $S^{2}_{s} \times S^{2}_{p}$ is set by the form $\pi_{s}^{*} \omega_s \wedge \pi_{p}^{*} \omega_p$. Then,
$$
\omega_{s, p} \wedge \omega_{s, p} = 2 \pi_{s}^{*} \omega_s \wedge \pi_{p}^{*} \omega_p,
$$
and, as noted above, the necessary condition for the existence of complicated focus-focus singularity is the coincidence of the symplectic areas of the spheres $S^{2}_{s}$ and $S^{2}_{p}$.

By direct calculations we can show that
$$
\displaystyle{ \int_{S^{2}_{s}} \omega_s = 4 \pi s,\ \displaystyle{ \int_{S^{2}_{p}}} \omega_p = 4 \pi p.}
$$
Thus, the complicated singularities can appear only on the orbits $s=p$. In terms of the values of given functions $f_1$ and $f_2$ these orbits are given by the condition $m = 0$.
\end{proof}

\section*{Model example of a mechanical system with a focus-focus singularity of complexity 2}
In this paragraph we will show that the estimates of complexity of a focus-focus singularity, given by the theorems \ref{thm2}, \ref{thm:thm_3_1} and \ref{thm:thm_3_2}, are exact, that is we will provide the examples of integrable systems with focus-focus singularities of complexity 2.
Consider on the space $e(3)^{*}$ a pair of functions
\begin{equation}\label{eq:form_4_1}
\displaystyle{H = \frac{1}{2} (m_1^2 + m_2^2 + m_3^2) + q_3^2,\ G = m_3,}
\end{equation}
which are in involution with respect to the bracket (\ref{eq:lie_bracket_e3}) and, hence, which define the integrable Hamiltonian systems with two degrees of freedom on the orbits $\mathcal{O}_{q,m}$. Denote as $h$ and $g$ the constant values of the integrals $H$ and $G$ respectively.

\begin{prop}
The system \eqref{eq:form_4_1} on the orbit $\mathcal{O}_{q, 0} = \left\{ f_1 = q^2, f_2 = 0 \right\}$ in $e(3)^{*}$ has a focus-focus singularity of complexity 2. The singular focus fiber $L$ is given by the equations $L = \left\{ H = q^2, G = 0 \right\}$.
\end{prop}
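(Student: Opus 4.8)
The plan is to do everything on the orbit $\mathcal O_{q,0}$. For $q\neq0$ it is diffeomorphic to $T^{*}S^{2}$: the relation $q_1m_1+q_2m_2+q_3m_3=0$ makes $\vec m=(m_1,m_2,m_3)$ tangent to the sphere $\{q_1^2+q_2^2+q_3^2=q^2\}$, and the restricted symplectic form is exact (over a generating sphere it integrates to $4\pi m$, as in the proof of Theorem~\ref{thm:thm_3_1}, which is $0$ for $m=0$, so the example is consistent with Theorem~\ref{thm2}). On this orbit $H$ is the natural mechanical Hamiltonian, kinetic energy $\tfrac12|\vec m|^2$ plus potential $q_3^2$, while $G=m_3$ generates the Hamiltonian circle action of rotation about the $q_3$-axis, and $\{H,G\}=0$ is immediate from~(\ref{eq:lie_bracket_e3}). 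First I would locate the rank-$0$ points on $L=\{H=q^2,\ G=0\}$. A rank-$0$ point has $\mathrm{sgrad}\,G=0$, hence is a fixed point of the rotation, hence has $\vec q$ on the $q_3$-axis, i.e. $\vec q=(0,0,\pm q)$; then the orbit relation forces $m_3=0$ and $H=q^2$ forces $m_1=m_2=0$, leaving only $x_1=((0,0,q),\mathbf 0)$ and $x_2=((0,0,-q),\mathbf 0)$. Substituting into the Euler equations~(\ref{eq:coalgebra_eq}) one checks that $\mathrm{sgrad}\,H$ and $\mathrm{sgrad}\,G$ both vanish at $x_1,x_2$; since their $(H,G)$-values are $(q^2,0)$, they lie on $L$ and are rank-$0$ points.

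Next I would show $L$ has no rank-$1$ point and that $x_1,x_2$ are its only rank-$0$ points. Take $x\in L$ with $\vec q(x)$ off the $q_3$-axis. Then $\mathrm{sgrad}\,G(x)\neq0$, so if $x$ is singular it has rank exactly $1$ and $\mathrm{sgrad}\,H(x)=\mu\,\mathrm{sgrad}\,G(x)$ for some $\mu$. Since rotation about the $q_3$-axis annihilates $q_3$ (i.e. $\{q_3,G\}=0$), this gives $\{q_3,H\}(x)=0$, that is $(\vec q\times\vec m)_3=q_1m_2-q_2m_1=0$ at $x$; but on $L$ one also has $m_3=0$ and, from the orbit relation, $q_1m_1+q_2m_2=0$, so the planar vectors $(q_1,q_2)$ and $(m_1,m_2)$ are at once orthogonal and proportional, forcing one of them to vanish — and in either case $x$ lands back on the $q_3$-axis (if $(m_1,m_2)=\mathbf 0$ then $\vec m=\mathbf 0$ and $H=q_3^2=q^2$). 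So every point of $L$ off the $q_3$-axis is a regular point, and the singular set of $L$ is exactly $\{x_1,x_2\}$, both of rank $0$; hence $L$ will be a focus fibre of complexity $2$ once $x_1,x_2$ are shown to be focus-focus.

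For the last step I would pass to Darboux coordinates near $x_1$, using $(q_1,q_2,m_1,m_2)$ with $q_3=\sqrt{q^2-q_1^2-q_2^2}$ and $m_3=-(m_1q_1+m_2q_2)/q_3$ eliminated; computing the induced Poisson structure shows that at $x_1$ the symplectic form is $\tfrac1q(dm_2\wedge dq_1-dm_1\wedge dq_2)$, so after the linear change $p_1=m_2/q$, $p_2=-m_1/q$ it becomes $dp_1\wedge dq_1+dp_2\wedge dq_2$, and then
$$
H-q^2=\frac{q^2}{2}(p_1^2+p_2^2)-(q_1^2+q_2^2)+O(3),\qquad G=p_2q_1-p_1q_2+O(3).
$$
The Hessians $d^2H(x_1)$ and $d^2G(x_1)$ are clearly independent, and the linearization of $\mathrm{sgrad}(H+bG)$ at $x_1$, computed most easily in the complex coordinates $z=q_1+iq_2$, $\zeta=p_1+ip_2$, has eigenvalues $\pm q\sqrt2\pm ib$, which for any $b\neq0$ are four distinct numbers of the form $\pm x\pm iy$ with $x=q\sqrt2\neq0$, $y=b\neq0$; equivalently $P(\lambda)$ has exactly the required roots, so $x_1$ is focus-focus, and the same computation (with $q_3=-\sqrt{q^2-q_1^2-q_2^2}$) applies at $x_2$. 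I expect this verification — pinning down the local Darboux frame and the quadratic parts — to be the only delicate point; the rest is bookkeeping. As an optional closing check one makes the picture explicit: $q_3=q\cos\theta$, $(q_1,q_2)=q\sin\theta(\cos\phi,\sin\phi)$, $(m_1,m_2)=\varepsilon\sqrt2\,q\sin\theta(-\sin\phi,\cos\phi)$ with $\varepsilon=\pm1$ parametrizes $L$ and exhibits it as two embedded Lagrangian $2$-spheres (one per sign of $\varepsilon$) meeting transversally at $x_1$ and $x_2$; in particular $L$ is connected, hence is a single fibre — a focus-focus fibre of complexity $2$.
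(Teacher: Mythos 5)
Your proof is correct and follows essentially the same route as the paper: locate the two rank-zero points on $L=\left\{H=q^2,\ G=0\right\}$, check that they are of focus-focus type, and exhibit $L$ explicitly (via the parametrization $m_1=\pm\sqrt2\,q_2$, $m_2=\mp\sqrt2\,q_1$, $m_3=0$) as two Lagrangian spheres meeting transversally at those two points, hence connected. You in fact supply more detail than the paper, which merely asserts the focus-focus character of the singular points and does not explicitly rule out rank-one points of $L$; your local Darboux computation giving eigenvalues $\pm q\sqrt2\pm ib$ and your exclusion of rank-one points are both sound.
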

\begin{proof}
The points $\left\{q_1 = q_2 = m_1 = m_2 = 0, m_3 = \varepsilon m, q_3 = \varepsilon q \right\},\ \varepsilon = \pm 1$ when $\abs{m} < 2 \sqrt{2} q$ are singular points of focus-focus type.
When $m = 0$, these points lie on the same level $L = \left\{ h = q^2, g = 0 \right\}$.

Let us prove that the singular fiber $L$, which contains the points $P_{+}$ и $P_{-}$, is connected, that is the orbit $\mathcal{O}_{q,0}$ actually contains a complicated focus-focus singularity, and not a pair of simple singularities. The fiber $L$ in the space $\mathbb{R}^6(m_1, m_2, m_3, q_1, q_2, q_3)$ is defined by the system of four equations
$$
L: \left\{
\begin{aligned}
f_1 &=  q_1^2 + q_2^2 + q_3^2 = q^2 \\
f_2 &= m_1 q_1 + m_2 q_2 + m_3 q_3 = 0 \\
H &= \dfrac{1}{2} (m_1^2 + m_2^2 + m_3^2) + q_3^2 = q^2 \\
G &= m_3 = 0
\end{aligned}
\right.,
$$
which has an explicit solution, namely:
vector $\boldsymbol{q} = (q_1, q_2, q_3)$ is any vector on the sphere
$$
\displaystyle{ q_1^2 + q_2^2 + q_3^2 = q^2},
$$
and for any such $\boldsymbol{q}$ we have
$$
\displaystyle{m_1 = \pm \sqrt{2} q_2,\quad m_2 = \mp \sqrt{2} q_1,\quad m_3 = 0}.
$$
It is clear that one fixed sign (both upper or both lower) gives a sphere, and these spheres intersect in two points $P_{+}$ and $P_{-}$.

Thus, the system \eqref{eq:form_4_1} is an example of an integrable system on $\mathcal{O}_{q, 0}$ with a focus-focus singularity of complexity 2.
Moreover, when we pass to the neighboring orbits $\mathcal{O}_{q, m}$, that means in case of a small integrable perturbation, the complicated focus-focus singularity splits into two simple singularities.
\end{proof}

The functions (\ref{eq:form_4_1}) are in involution with respect not only to the bracket on $e(3)^{*}$,
but also to the whole family of the Poisson brackets of the form
\begin{equation}\label{eq:form_4_2}
\displaystyle{ \left\{ m_i, m_j \right\} = \sum_{k} \varepsilon_{ijk} m_k,\ \left\{ m_i, q_j \right\} = \sum_{k} \varepsilon_{ijk} q_k,\ \left\{ q_i, q_j \right\} = \lambda \sum_{k} \varepsilon_{ijk} m_k}.
\end{equation}
The equations for the orbits of the coadjoint representation for this bracket depend on the parameter $\lambda$ of the family of the brackets as follows:
$$
\displaystyle{ \mathcal{O}_{p,m} = \left\{ f_1 = \lambda (m_1^2 + m_2^2 + m_3^2) + (q_1^2 + q_2^2 + q_3^2) = c^2,\quad f_2 = m_1 q_1 + m_2 q_2 + m_3 q_3 = m c \right\} }
$$
When $\lambda > 0$, we can make the following change of variables: $m_i = \widetilde{m}_i, q_i = \sqrt{\lambda}\, \widetilde{q}_i$, and after this we can figure out that the bracket (\ref{eq:form_4_2}) corresponds to the algebra $so(4)$.
\begin{prop}
The system \eqref{eq:form_4_1} on the orbit $\mathcal{O}_{c, 0} = \left\{ f_1 = c^2, f_2 = 0 \right\}$ of the bracket \eqref{eq:form_4_2} has a focus-focus singularity of complexity 2. The singular focus fiber $L$ is given by the equations $L = \left\{ H = c^2, G = 0 \right\}$.
\end{prop}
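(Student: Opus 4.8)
The plan is to follow the preceding proposition almost verbatim: the Hamiltonian and integral $H,G$ are unchanged, and only the constraint $f_1$ and the induced symplectic form on the orbit now carry the parameter $\lambda$. The three steps are (i) locate the rank-$0$ points on $\mathcal{O}_{c,0}$, (ii) check that they are of focus-focus type, and (iii) prove that the fibre $L$ is connected. For step (i) the candidates are
$$
P_{\pm} = \{\, q_1 = q_2 = m_1 = m_2 = m_3 = 0,\ q_3 = \pm c \,\},
$$
which manifestly satisfy $f_1 = c^2$, $f_2 = 0$, $H = c^2$, $G = 0$, hence lie on $L \subset \mathcal{O}_{c,0}$; writing the Euler equations $\dot x_i = \{x_i, H\}$ for the bracket \eqref{eq:form_4_2} and differentiating $G = m_3$ one sees that $\mathrm{sgrad}\,H$ and $\mathrm{sgrad}\,G$ vanish there. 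Near $P_{\pm}$ one may use $(m_1, m_2, q_1, q_2)$ as coordinates on the orbit, since the Jacobian of $(f_1, f_2)$ with respect to $(m_3, q_3)$ equals $-2c^2 \neq 0$ at these points.

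Step (ii) is the one genuinely computational point. In the chart $(m_1, m_2, q_1, q_2)$ one computes the Hessians $\mathrm d^2 H$, $\mathrm d^2 G$ and the matrix $\Omega$ of the restricted symplectic form at $P_{\pm}$, forms the characteristic polynomial $\det(a\,\mathrm d^2 H + b\,\mathrm d^2 G - \mu\,\Omega)$, and checks that its roots have the form $\pm x \pm i y$ with $x, y \neq 0$. For $\lambda = 0$ this is exactly the $e(3)$ computation of the preceding proposition (where the required inequality reduces to $c \neq 0$); being an open condition it persists for $\lambda$ near $0$, and a direct check shows it continues to hold on the range $\lambda < \tfrac12$, which is the relevant one: for $0<\lambda<\tfrac12$ the substitution $q_i = \sqrt{\lambda}\,\widetilde q_i$ identifies \eqref{eq:form_4_2} with the Lie--Poisson bracket of $so(4)$, so this realizes the example on an $so(4)$-orbit. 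I expect the type to degenerate precisely at $\lambda = \tfrac12$; in step (iii) this shows up as the collapse of the two spheres of $L$ onto one.

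Step (iii), the connectedness of $L = \mathcal{O}_{c,0} \cap \{H = c^2,\ G = 0\}$, is the substantive part, and I would handle it by solving the four defining equations explicitly, as in the preceding proof. The condition $G = 0$ forces $m_3 = 0$; the remaining constraints then give
$$
m_1^2 + m_2^2 = 2(c^2 - q_3^2),\qquad q_1^2 + q_2^2 = (1 - 2\lambda)(c^2 - q_3^2),\qquad m_1 q_1 + m_2 q_2 = 0,
$$
so $q_3$ is a free parameter in $[-c, c]$ and, for each such value, $(q_1, q_2)$ and $(m_1, m_2)$ are mutually orthogonal planar vectors of prescribed norms; explicitly $q_1 = \mp \kappa m_2$, $q_2 = \pm \kappa m_1$ with $\kappa = \sqrt{(1 - 2\lambda)/2}$. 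Fixing one choice of signs (both upper or both lower) yields a trivial circle bundle over $q_3 \in (-c, c)$ whose fibres collapse to $P_{\pm}$ at the endpoints, i.e.\ a smoothly embedded $2$-sphere; the two spheres obtained from the two sign choices meet exactly at $P_{+}$ and $P_{-}$, and transversally there, as one reads off from their tangent planes in the $(m_1, m_2, q_1, q_2)$ chart. Hence $L$ is the union of these two spheres, is connected, and has $P_{\pm}$ as its only rank-$0$ points (the remaining points being regular of rank $2$); it is therefore a focus fibre of complexity $2$, and, as in the $e(3)$ case, passing to a neighbouring orbit $\mathcal{O}_{c, m}$ splits it into two simple focus-focus singularities. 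The only real obstacle is the bookkeeping of step (ii) — confirming that the $\lambda$-deformation does not change the focus-focus type of $P_{\pm}$ — together with tracking the range $\lambda < \tfrac12$ that keeps the two spheres of $L$ distinct, which is exactly consistent with the obstruction of Theorem~\ref{thm1}.
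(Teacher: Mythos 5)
Your proposal is correct and follows essentially the same route as the paper: identify the rank-zero points $P_{\pm}$, note that they are of focus-focus type (the paper simply asserts this for sufficiently small $\lambda$, exactly as your open-condition argument at $\lambda=0$ justifies), and then solve the four fibre equations explicitly to exhibit $L$ as two spheres meeting transversally at $P_{\pm}$ — your parametrization $q_1^2+q_2^2=(1-2\lambda)(c^2-q_3^2)$, $m_1^2+m_2^2=2(c^2-q_3^2)$ matches the paper's ellipsoid description verbatim. The extra bookkeeping you supply (the chart $(m_1,m_2,q_1,q_2)$, the vanishing of the Hamiltonian vector fields, the degeneration at $\lambda=\tfrac12$) only fills in details the paper leaves implicit.
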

\begin{proof}
Consider the integrable system (\ref{eq:form_4_1}) on the regular ($c \neq 0$) orbit $\mathcal{O}_{q,0}$. The points
$$
P_{\pm} = \left\{q_1 = q_2 = m_1 = m_2 = m_3 = 0, q_3 = \varepsilon c \right\},\ \varepsilon = \pm 1$$
are points of focus-focus type for sufficiently small $\lambda$.

As in the previous example the singular fiber is almost explicitly defined: vector $\boldsymbol{q} = (q_1, q_2, q_3)$ is any vector on the ellipsoid
$$
\displaystyle{ \frac{1}{1 - 2 \lambda} (q_1^2 + q_2^2) + q_3^2 = c^2},
$$
and for any such $\boldsymbol{q}$ we have
$$
\displaystyle{m_1 = \pm q_2 \sqrt{\dfrac{2}{1 - 2 \lambda}},\quad m_2 = \mp q_1 \sqrt{\dfrac{2}{1 - 2 \lambda}},\quad m_3 = 0}.
$$

As previously we have got two spheres which are intersecting transversally at the points $P_{+}$ and $P_{-}$.
So, the system \eqref{eq:form_4_1} is integrable Hamiltonian system on the orbit $\mathcal{O}_{c,0}$ which has got a focus-focus singularity of complexity 2.
\end{proof}
\section*{Acknowledgements}
The author would like to thank Prof. A.A.\,Oshemkov for the helpful discussion.


\end{document}